\def\E{{\mathbb E}}
\newtheorem{theorem}{Theorem}
\newtheorem{lemma}{Lemma}
\newtheorem{remark}{Remark}
\title{Limit theorems for a minimal random walk model}
\author{Cristian F. Coletti, Renato J. Gava and Lucas R. de Lima}
\date{\today}
\address{
\newline 
UFABC - Centro de Matem\'atica, Computa\c{c}\~ao e Cogni\c{c}\~ao.
\newline
Avenida dos Estados, 5001, Santo Andr\'e - S\~ao Paulo, Brasil
\newline
e-mail:  \rm \texttt{cristian.coletti@ufabc.edu.br}
\newline
e-mail:  \rm \texttt{lucas.roberto@ufabc.edu.br}
\newline 
\newline
UFSCAR - Departamento de Estat\'{\i}stica.
\newline  Rodovia Washington Luiz, Km 235, CEP 13565-905, S\~ao Carlos, Brasil
\newline
e-mail:  \rm \texttt{gava@ufscar.br} 
}
\thanks{R.J.G. thanks FAPESP (grants 2017/10555-0 and 2018/04764-9).}
\thanks{C.F.C. thanks FAPESP grant 2017/10555-0.}
\thanks{L. R. L. thanks CAPES - Finance Code 001.}
\begin{document}


\begin{abstract}

We study the minimal random walk introduced by Kumar, Harbola and Lindenberg \cite{Harb14}.
It is a random process on $\{0, 1, \ldots  \}$ with unbounded memory which exhibits 
subdiffusive, diffusive and superdiffusive regimes. We prove the law of large numbers
for the whole parameter set. Then we prove the central limit theorem and the law of the 
iterated logarithm for the minimal random walk under diffusive and marginally superdiffusive 
behaviors. More interestingly, we establish a result for the minimal random walk when it 
possesses the three regimes; we show the convergence of its rescaled version to a non-normal random variable.

\end{abstract}

\maketitle


\section{Introduction}\label{intro}

In this work we are concerned with the minimal random walk model introduced in \cite{Harb14},
a random walk $X_n$ on $\{0,1, 2, \ldots \}$ with dependent increments and unbounded memory 
such that at each step the walker either moves to the nearest neighbour to its right or remains at the same place. The 
goal of Kumar, Harbola and Lindenberg \cite{Harb14} was to introduce a model, as simple as possible, in which 
the exact computation of the first two moments, depending on the choice of a pair of 
parameters $0 \leq q \leq 1$ and $0 \leq p \leq 1$, enables us to view the emergence of 
subdiffusive, diffusive and superdiffusive regimes.

Some years before the appearance of the minimal model proposed in \cite{Harb14}, the same authors introduced the 
lazy elephant random walk \cite{Harb10}, a generalization of the so-called elephant random walk proposed 
by Sch\"utz and Trimper \cite{ST}. This generalized model, like the one treated in this 
article, also exhibits the three different diffusion regimes; it allows the walker to go forward, 
backward or remain at rest. Therefore the random walk we deal with in this work is simpler: 
the increments have only two possible choices, either a forward step or a resting step.
All these models are  non-Markovian  since their past or memory matters 
in the future steps of the walk. In the last two decades, random walks with unbounded memory 
has received considerable attention from the physics community as one can see in \cite{Bu,daSi13,Harb10,Harb14,Harb18,K16,KM,ST} and references therein. More recently some 
mathematical papers \cite{Baur16,Bercu,CGS1,CGS2} have established rigorously the first 
limit theorems on the subject. Regarding interacting non-Markovian random walks it is worth mentioning that the authors of \cite{AR,KHG} considered the influence of memory in such systems. In both works the authors  study how  memory affects the behavior of an interacting particle system and its relation to non-Markovian exclusion processes.

Here we establish a series of results for the minimal random walk \cite{Harb14}. First, 
we prove a strong law of large numbers (Theorem \ref{lln}), then we prove a central limit 
theorem (Theorem \ref{clt}) and the law of the iterated logarithm (Theorem \ref{lil}) for 
an appropriate choice of parameters at which the model shows diffusive and 
marginally superdiffusive behaviors. Finally, in Theorem \ref{q0thm}, we state our main 
result. We address the case in which $q=0$ where the random walk $X_n$ exhibits subdiffusion, 
diffusion and superdiffusion regimes depending on whether $p < 1/2$, $p=1/2$ or $p > 1/2$ 
respectively. We show that the rescaled random walk $X_n$ converges to a non-degenerate 
non-normal random variable for any $p \in (1/2,1)$. In order to show that it does not obey 
a normal distribution we compute the first four moments of the limit random variable. It 
remains open to find out the correct scaling of $X_n$ in order to obtain convergence for 
$q = 0$ and $p \leq 1/2$. 
The proofs given here are based on a martingale approach \cite{HH,St2,St1}.
The martingale theory is an extension of the theory of stochastic processes with independent 
increments, allowing dependency among them.
Given a sequence $\{ M_n \}_{n \geq 0}$ and all the information $\mathcal{F}_n$ on this sequence 
up to time $n$, we say that $M_n$ is a martingale if 
\begin{align*}
\mathbb{E}(M_{n+1}| \mathcal{F}_n) = M_n.
\end{align*}
In other words, conditioned on the history up to time $n$, the average value at $M_{n+1}$ is equal
to $M_n$. A nice introduction to the subject can be found in Williams \cite{Wi}.


The paper is organized as follows. In the next section we define the model formally. In Section 
\ref{main} we state the main results of this work. We begin Section \ref{proofs} with some preliminaries results and then we present the proofs of  the main theorems.

\section{Definition of the model}

We now define the  minimal random walk as follows. It starts at $X_0 = 0$. At each discrete time step the 
walker moves one step to the right or stays put. Therefore
\begin{equation*}
X_{n+1} = X_n + \eta_{n+1},
\end{equation*}
where $\eta_{n+1} \in \{0,1 \}$ is a random variable. The memory consists of the set of random variables 
$\eta_{n^{\prime}}$ at previous time steps which the walker remembers as follows.

Initially, the walker jumps to the right with probability $s$ and  remains at 0 with probability $1-s$.
At time $n+1$, for $n\geq 1$, a number $n^{\prime}$ is chosen from the set $\{1,2, \ldots , n\}$
uniformly at random. Then $\eta_{n+1}$ is determined stochastically by the following rule.
If $\eta_{n^{\prime}} =  1$, then
\begin{align*}
\eta_{n+1} = \left\{\begin{array}{cl}
1 & \text{ with probability } \, p \\
0 &  \text{ with probability } 1-p
\end{array}\right.
.
\end{align*}
If $\eta_{n^{\prime}} = 0$, then 
\begin{align*}
\eta_{n+1} = \left\{\begin{array}{cl}
1 & \text{ with probability } \, q \\
0 &  \text{ with probability } 1-q
\end{array}\right.
.
\end{align*}

It turns out from the definition that $\displaystyle X_n = \sum_{k=1}^n \eta_k$ and that
\begin{align}\label{conditional}
\mathbb{P}[\eta_{n+1} = \eta|\eta_1, \ldots, \eta_n] & = \frac{1}{n} \sum_{k=1}^n 
\left[1 - \eta +\left(2\eta -1\right)( q + \alpha \eta_k )\right] \\
& = 1 - \eta +\left(2\eta -1\right)( q + \alpha \frac{X_n}{n}) \quad \mbox{for} \ n \geq 1,  \nonumber
\end{align}
where $\eta \in \{0,1 \}$ and $\alpha = p -q$. 

The expectation of the increment $\eta_{n+1}$ is given by
\begin{equation}\label{conditional2} 
\mathbb{E}[\eta_{n+1}] = q + \alpha \frac{\mathbb{E}[X_n]}{n} \quad \mbox{for} \ n \geq 1.
\end{equation}
In \cite{Harb14} the authors showed that
\begin{align} \label{mean}
\mathbb{E}[X_n] & = \frac{qn}{1- \alpha} + (s-\frac{q}{1 - \alpha}) \frac{\Gamma(n+\alpha)}{\Gamma(1+ \alpha)\Gamma(n)} \\
& \sim \frac{qn}{1- \alpha} + (s-\frac{q}{1 - \alpha})\frac{n^{\alpha}}{\Gamma(1+ \alpha)}
\quad \mbox{ for } \quad n \gg 1. \nonumber
\end{align}



\section{Main results}\label{main}

\begin{theorem}\label{lln}
Let $(X_n)_{n \geq1}$ be the minimal random walk described above. Then
\begin{eqnarray}
\lim_{n \rightarrow \infty} \frac{X_n - \mathbb{E}[X_n]}{n} = 0  \quad  \mbox{a.s.} \nonumber
\end{eqnarray}
for any value of $\alpha \in [-1,1)$. In other words,
\begin{eqnarray}
\lim_{n \rightarrow \infty} \frac{X_n}{n} = \frac{q}{1 - \alpha} \quad \mbox{a.s.} \nonumber
\end{eqnarray}
\end{theorem}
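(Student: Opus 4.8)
The starting point is that $(X_n-\mathbb{E}[X_n])$ is not itself a martingale, but it becomes one after a suitable deterministic rescaling. Taking conditional expectations in $X_{n+1}=X_n+\eta_{n+1}$ and using \eqref{conditional} gives
\begin{equation*}
\mathbb{E}[X_{n+1}\mid\mathcal{F}_n]=q+\frac{n+\alpha}{n}\,X_n,
\end{equation*}
and the same recursion holds for the deterministic sequence $\mathbb{E}[X_n]$. Hence, writing $Y_n=X_n-\mathbb{E}[X_n]$, one gets $\mathbb{E}[Y_{n+1}\mid\mathcal{F}_n]=\frac{n+\alpha}{n}Y_n$. The plan is therefore to set
\begin{equation*}
a_n=\prod_{k=1}^{n-1}\frac{k}{k+\alpha}=\frac{\Gamma(n)\,\Gamma(1+\alpha)}{\Gamma(n+\alpha)},
\end{equation*}
so that $M_n:=a_nY_n$ satisfies $\mathbb{E}[M_{n+1}\mid\mathcal{F}_n]=M_n$, i.e. $(M_n)$ is an $(\mathcal{F}_n)$-martingale. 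By Stirling's formula $a_n\sim\Gamma(1+\alpha)\,n^{-\alpha}$. For the degenerate endpoint $\alpha=-1$ (equivalently $p=0,q=1$) the product must be started at $k=2$, since the coefficient $1+\alpha$ then vanishes and $Y_1$ drops out of the recursion; this is only a cosmetic reindexing.

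Next I would record that the martingale increments are uniformly bounded. A short computation using $a_n\frac{n+\alpha}{n}=a_{n+1}^{-1}a_n a_{n+1}$ gives $M_n-M_{n-1}=a_n\bigl(\eta_n-\mathbb{E}[\eta_n\mid\mathcal{F}_{n-1}]\bigr)$, and since both terms lie in $[0,1]$ we have $|M_n-M_{n-1}|\le a_n$, whence $\mathbb{E}[(M_n-M_{n-1})^2]\le a_n^2$.

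To run the strong law I would choose the normalising sequence $b_n:=n\,a_n$, which is strictly increasing (its successive ratio is $\tfrac{n+1}{n+\alpha}>1$ for $\alpha<1$) and tends to infinity, with $b_n\sim\Gamma(1+\alpha)\,n^{1-\alpha}$. The crucial and essentially only quantitative input is then
\begin{equation*}
\sum_{n\ge1}\frac{\mathbb{E}[(M_n-M_{n-1})^2]}{b_n^2}\le\sum_{n\ge1}\frac{a_n^2}{n^2\,a_n^2}=\sum_{n\ge1}\frac{1}{n^2}<\infty .
\end{equation*}
Notice that this reduces to a $\sum n^{-2}$ series \emph{independently of} $\alpha$, which is exactly why the argument works uniformly over the whole range $\alpha\in[-1,1)$. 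By the $L^2$-martingale convergence theorem the series $\sum_n b_n^{-1}(M_n-M_{n-1})$ converges almost surely, and Kronecker's lemma then yields $M_n/b_n\to0$ a.s. Since $M_n/b_n=Y_n/n$, this is precisely $\frac{X_n-\mathbb{E}[X_n]}{n}\to0$ a.s.

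Finally, the second display follows from the first by adding back the mean: the exact formula \eqref{mean} gives $\frac{\mathbb{E}[X_n]}{n}=\frac{q}{1-\alpha}+O(n^{\alpha-1})\to\frac{q}{1-\alpha}$ because $\alpha<1$. The main obstacle is not any single estimate but identifying the correct martingale normalisation $a_n$ through the Gamma-function product; once this is in place, the bounded-increment bound together with the Kronecker/$L^2$ argument is routine, and only the reindexing at $\alpha=-1$ needs a separate remark.
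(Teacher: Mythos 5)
Your proof is correct and is built on the same object as the paper's: your $a_n=\Gamma(n)\Gamma(1+\alpha)/\Gamma(n+\alpha)$ is the reciprocal of the $a_n$ defined in Subsection~\ref{pre}, so your $M_n=a_n\left(X_n-\mathbb{E}[X_n]\right)$ is exactly the paper's martingale, and your increment bound $|M_n-M_{n-1}|\le a_n$ is (a factor-$2$ sharpening of) the paper's \eqref{2aj}. The only methodological difference is the final step: the paper invokes Stout's result (Theorem~\ref{St1.thm.3.3.1}) with $b_i=i^{1-\alpha}$ and $r\in(1,2)$, producing the convergent series $\sum_j j^{-r}$, whereas you take $b_n=na_n\sim\Gamma(1+\alpha)\,n^{1-\alpha}$, $r=2$, and inline the standard proof of that very theorem ($L^2$-boundedness of the martingale $\sum_j b_j^{-1}D_j$ plus Kronecker's lemma), getting $\sum_n n^{-2}$. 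The mechanism is identical, so this is essentially the same approach, yours being self-contained where the paper relies on a citation. Two further remarks. First, your treatment of the endpoint $\alpha=-1$ is a genuine improvement: the paper defines $a_n$ only for $\alpha>-1$ (at $\alpha=-1$ the factor $1+\alpha/1$ vanishes, so the paper's $a_n=0$ for $n\ge2$ and $M_n$ is undefined), hence the case $\alpha=-1$ claimed in the theorem is not literally covered by the paper's argument, while your reindexed product $\prod_{k\ge2}k/(k+\alpha)$ handles it. Second, a typo worth repairing: the parenthetical identity $a_n\frac{n+\alpha}{n}=a_{n+1}^{-1}a_na_{n+1}$ is garbled (its right-hand side is just $a_n$); the identity your computation actually needs is $a_{n-1}=a_n\frac{n-1+\alpha}{n-1}$, which, combined with $\mathbb{E}[\eta_n\mid\mathcal{F}_{n-1}]-\mathbb{E}[\eta_n]=\frac{\alpha}{n-1}\left(X_{n-1}-\mathbb{E}[X_{n-1}]\right)$, does yield your formula $M_n-M_{n-1}=a_n\left(\eta_n-\mathbb{E}[\eta_n\mid\mathcal{F}_{n-1}]\right)$, so the conclusion stands.
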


\begin{remark}
The case $\alpha  = 1$ is not covered by the strong law of large numbers(SLLN). Indeed, when $p = 1$ and $q = 0$, the walk is trivial since by definition its dynamics is determined by the first step $\eta_1$, that is, $\eta_n = \eta_1$ 
for all n $\geq 1$. Hence $X_n /n$ reduces to a binary random variable. 
\end{remark}


\begin{theorem}\label{clt}
Let $(X_n)_{n \geq 1}$ be the minimal random walk and let $\alpha \leq 1/2, p<1$ and $q>0$. 
\begin{itemize}
\item[a)] If $\alpha < 1/2$, then 
\begin{align}
\dfrac{X_n - \dfrac{q}{1 - \alpha}n}{\sqrt{n}} \xrightarrow{d} N\left( 0,\frac{q(1-p)}{(1 - \alpha)^2(1 - 2 \alpha)} \right). \nonumber
\end{align}
\item[b)] If $\alpha = 1/2$, then
\begin{align}
 \dfrac{X_n - 2qn}{\sqrt{n \log n}} \xrightarrow{d} N\left( 0,4q(1-p)\right) . \nonumber
\end{align}
\end{itemize}
\end{theorem}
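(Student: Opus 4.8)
The plan is to follow the martingale route announced in the introduction. First I would linearise the one-step conditional mean: from \eqref{conditional} we have
\[
\mathbb{E}[X_{n+1}\mid\mathcal{F}_n] = X_n + q + \alpha\frac{X_n}{n} = q + \frac{n+\alpha}{n}\,X_n ,
\]
so that after introducing the deterministic weights
\[
a_n = \frac{\Gamma(1+\alpha)\,\Gamma(n)}{\Gamma(n+\alpha)} \sim n^{-\alpha},
\]
which satisfy $a_{n+1}\tfrac{n+\alpha}{n}=a_n$, the process $M_n := a_n X_n - c_n$ becomes an $\mathcal{F}_n$-martingale, where $c_n$ is the deterministic sequence determined by $c_{n+1}-c_n = q\,a_{n+1}$. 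Since $\mathbb{E}[M_n]$ is then constant, one gets $M_n = a_n\big(X_n-\mathbb{E}[X_n]\big)+K$ for some constant $K$, and a short computation gives the increment
\[
\Delta M_{n+1} = a_{n+1}\big(\eta_{n+1}-\mathbb{E}[\eta_{n+1}\mid\mathcal{F}_n]\big).
\]

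The heart of the argument is to control the predictable quadratic variation $\langle M\rangle_n = \sum_{k=1}^n \mathbb{E}[(\Delta M_k)^2\mid\mathcal{F}_{k-1}]$. Because $\eta_k$ is conditionally Bernoulli with parameter $p_{k-1}:=q+\alpha X_{k-1}/(k-1)$, this equals $\sum_{k=1}^n a_k^2\,p_{k-1}(1-p_{k-1})$. By the strong law (Theorem \ref{lln}) we have $X_{k-1}/(k-1)\to \tfrac{q}{1-\alpha}$ a.s., hence $p_{k-1}\to \ell:=\tfrac{q}{1-\alpha}$ and $p_{k-1}(1-p_{k-1})\to \ell(1-\ell)=\tfrac{q(1-p)}{(1-\alpha)^2}$ a.s., where I use $1-\alpha-q=1-p$. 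Since $\alpha\le 1/2$ forces $\sum_k a_k^2=\infty$, a Toeplitz (Cesàro) argument converts this into the a.s.\ equivalence
\[
\langle M\rangle_n \sim \frac{q(1-p)}{(1-\alpha)^2}\sum_{k=1}^n a_k^2 ,
\]
after which I would insert $\sum_{k=1}^n a_k^2 \sim \tfrac{n^{1-2\alpha}}{1-2\alpha}$ when $\alpha<1/2$ and $\sum_{k=1}^n a_k^2 \sim \log n$ when $\alpha=1/2$, which explains the two distinct normalisations.

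To finish I would apply a martingale central limit theorem \cite{HH}. The Lindeberg condition is automatic since $|\Delta M_k|\le a_k$ is bounded while $\langle M\rangle_n\to\infty$, so $\max_{k\le n}|\Delta M_k|/\sqrt{\langle M\rangle_n}\to 0$. This yields $M_n/\sqrt{\langle M\rangle_n}\xrightarrow{d}N(0,1)$. Writing $X_n-\mathbb{E}[X_n]=(M_n-K)/a_n$ and substituting the variance asymptotics gives, for $\alpha<1/2$,
\[
\frac{X_n-\mathbb{E}[X_n]}{\sqrt{n}} = \frac{M_n-K}{a_n\sqrt{n}} \xrightarrow{d} N\!\left(0,\frac{q(1-p)}{(1-\alpha)^2(1-2\alpha)}\right),
\]
and analogously with $\sqrt{n\log n}$ and limiting variance $4q(1-p)$ when $\alpha=1/2$. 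Finally, by \eqref{mean} the correction $\mathbb{E}[X_n]-\tfrac{q}{1-\alpha}n \sim (s-\tfrac{q}{1-\alpha})\tfrac{n^{\alpha}}{\Gamma(1+\alpha)}$, and since $n^{\alpha}=o(\sqrt{n})$ for $\alpha<1/2$ while $n^{1/2}=o(\sqrt{n\log n})$ for $\alpha=1/2$, Slutsky's theorem lets me replace $\mathbb{E}[X_n]$ by $\tfrac{q}{1-\alpha}n$ (equal to $2qn$ when $\alpha=1/2$). The hypotheses $q>0$ and $p<1$ are exactly what guarantees $\ell(1-\ell)>0$, i.e.\ a non-degenerate Gaussian limit.

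The main obstacle I anticipate is the a.s.\ equivalence $\langle M\rangle_n \sim \ell(1-\ell)\sum_k a_k^2$: this is the step where the strong law is genuinely needed, to upgrade the pointwise convergence of the conditional variances to convergence of their $a_k^2$-weighted average. The separate bookkeeping of the two regimes, where $\sum a_k^2$ grows like $n^{1-2\alpha}$ versus $\log n$, is what makes $\alpha=1/2$ the critical (marginally superdiffusive) threshold and must be handled carefully to land on the correct scalings.
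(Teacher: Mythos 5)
Your proposal follows essentially the same route as the paper: your $M_n = a_nX_n - c_n$ is, up to the additive constant $K$ and replacing the paper's weights by their reciprocals, exactly the paper's martingale $(X_n - \mathbb{E}[X_n])/a_n$ with $a_n = \Gamma(n+\alpha)/(\Gamma(n)\Gamma(1+\alpha))$; your quadratic-variation asymptotics via Theorem \ref{lln} plus a Toeplitz argument is the paper's step \eqref{as}; and the martingale CLT of \cite{HH} (Theorem \ref{HH.cor.3.1}) is invoked in the same way, with the same two-regime bookkeeping $\sum_k a_k^2 \asymp n^{1-2\alpha}$ versus $\log n$. Two of your choices are in fact slightly cleaner than the paper's: you work with the random conditional probabilities $q+\alpha X_{k-1}/(k-1)$ directly rather than the deterministic marginals $p_j=\mathbb{E}[\eta_j]$, and you make explicit the final Slutsky step replacing $\mathbb{E}[X_n]$ by $\tfrac{q}{1-\alpha}n$ using \eqref{mean}, which the paper passes over silently. (One harmless slip: $a_n \sim \Gamma(1+\alpha)\,n^{-\alpha}$, not $n^{-\alpha}$; since you drop the same factor from $\sum_{k\le n}a_k^2$, the constant cancels in the ratio $\sqrt{\langle M\rangle_n}/(a_n\sqrt{n})$ and the limiting variances are unaffected.)

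The one genuine gap is your Lindeberg verification. You claim it is ``automatic since $|\Delta M_k|\le a_k$ is bounded,'' but this is false when $\alpha<0$, i.e. $q>p$, which is permitted by the hypotheses ($\alpha\le 1/2$, $p<1$, $q>0$): in that case $a_k \sim \Gamma(1+\alpha)k^{-\alpha}\to\infty$, so the martingale increments are unbounded. The condition still holds, but it needs the ratio argument: for $\alpha<0$ the weights $a_k$ are increasing, so $\max_{k\le n}|\Delta M_k| \le a_n = O(n^{-\alpha})$, while $\sqrt{\langle M\rangle_n} \asymp n^{1/2-\alpha}$ a.s.; hence $\max_{k\le n}|\Delta M_k|/\sqrt{\langle M\rangle_n} = O(n^{-1/2})\to 0$ and the sets $\lbrace |\Delta M_k|/\sqrt{\langle M\rangle_n} > \varepsilon\rbrace$ are eventually empty. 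This is precisely why the paper's proof splits the Lindeberg check into two cases: $\alpha\ge 0$, where its normalized increments are bounded by $2/s_n$, and $\alpha<0$, where it uses monotonicity of the weights together with $a_ns_n\to\infty$. With that one line added, your proof is complete.
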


\begin{theorem}\label{lil}
Let $(X_n)_{n \geq 1}$ be the minimal random walk and let $\alpha \leq 1/2$, $p<1$ and $q > 0$.
\begin{itemize}
\item[a)] If $\alpha < 1/2$, then 
\begin{align}
\limsup_{n \to \infty}  \dfrac{\vert X_n - \dfrac{qn}{1- \alpha} \vert }{\sqrt{2 n \log \log n}} = \sqrt{\frac{q(1- p)}{(1 - \alpha)^2(1 - 2\alpha)}} \mbox{ a.s. } \nonumber
\end{align}
 
\item[b)] If $\alpha = 1/2$, then
\begin{align}
\limsup_{n \to \infty}  \dfrac{|X_n - 2qn |}{\sqrt{2 n \log n \log \log \log n}} = \sqrt{4q(1 - p)} \mbox{ a.s. } \nonumber
\end{align}
\end{itemize}
\end{theorem}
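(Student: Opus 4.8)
The plan is to build the natural martingale associated with $X_n$ — the same one that drives the central limit theorem (Theorem \ref{clt}) — and feed it into a law of the iterated logarithm for martingales. Set $a_n = \Gamma(n)/\Gamma(n+\alpha)$, so that $a_{n+1}(n+\alpha)/n = a_n$, and define
\begin{equation*}
M_n = a_n X_n - q\sum_{k=1}^{n-1} a_{k+1}.
\end{equation*}
Using (\ref{conditional2}) one checks that $\mathbb{E}[M_{n+1}\mid\mathcal{F}_n] = M_n$, so $(M_n)$ is a martingale; since its drift part is deterministic, $M_n - \mathbb{E}[M_n] = a_n\bigl(X_n - \mathbb{E}[X_n]\bigr)$. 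As $a_n \sim n^{-\alpha}/\Gamma(1+\alpha)$, recovering the fluctuations of $X_n$ from those of $M_n$ amounts to multiplying by $a_n^{-1}\sim \Gamma(1+\alpha)\,n^{\alpha}$.

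The increments are $\Delta M_k = a_k\bigl(\eta_k - \mathbb{E}[\eta_k\mid\mathcal{F}_{k-1}]\bigr)$, so their conditional variance is $\mathbb{E}[(\Delta M_k)^2\mid\mathcal{F}_{k-1}] = a_k^2\,\mu_k(1-\mu_k)$ with $\mu_k = q + \alpha X_{k-1}/(k-1)$. The strong law of large numbers (Theorem \ref{lln}) gives $\mu_k \to q/(1-\alpha) =: \mu_\infty$ almost surely, whence $\mu_k(1-\mu_k)\to \mu_\infty(1-\mu_\infty) = q(1-p)/(1-\alpha)^2$, using $1-\alpha-q = 1-p$. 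A Toeplitz-lemma argument with the weights $a_k^2$, whose partial sums diverge, then yields the predictable quadratic variation
\begin{equation*}
s_n^2 := \sum_{k=1}^n \mathbb{E}[(\Delta M_k)^2\mid\mathcal{F}_{k-1}] \sim \frac{q(1-p)}{(1-\alpha)^2}\sum_{k=1}^n a_k^2 \qquad \text{a.s.}
\end{equation*}
Since $a_k^2\sim k^{-2\alpha}/\Gamma(1+\alpha)^2$, one has $\sum_{k\le n}a_k^2\sim n^{1-2\alpha}/\bigl[(1-2\alpha)\Gamma(1+\alpha)^2\bigr]$ when $\alpha<1/2$ and $\sum_{k\le n}a_k^2\sim \log n/\Gamma(1+\alpha)^2$ when $\alpha=1/2$.

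Next I would invoke Stout's law of the iterated logarithm for martingales \cite{St1,St2,HH}. Its hypotheses are straightforward to verify here: the increments satisfy $|\Delta M_k|\le a_k\to 0$, the predictable variation diverges, $s_n^2\to\infty$, and the negligibility condition $a_n = o\bigl((s_n^2/\log\log s_n^2)^{1/2}\bigr)$ holds in both regimes (comparing $n^{-\alpha}$ with $n^{1/2-\alpha}$ when $\alpha<1/2$, and trivially since $n^{-1/2}\to 0$ when $\alpha=1/2$). This gives
\begin{equation*}
\limsup_{n\to\infty}\frac{M_n-\mathbb{E}[M_n]}{\sqrt{2\,s_n^2\,\log\log s_n^2}} = 1 \qquad \text{a.s.}
\end{equation*}
Dividing through by $a_n$ and substituting the asymptotics of $a_n^{-1}$ and $s_n^2$ collapses all $\Gamma$-factors and reproduces the stated constants: in case (a), $s_n^2$ is of order $n^{1-2\alpha}$, so $\log\log s_n^2\sim\log\log n$ and the normalizer becomes $\sqrt{2n\log\log n}$; in case (b), $s_n^2$ is of order $\log n$, so $\log\log s_n^2\sim\log\log\log n$, which is exactly what produces the triple logarithm $\sqrt{2n\log n\,\log\log\log n}$. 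Finally, using (\ref{mean}), the difference between $\mathbb{E}[X_n]$ and $qn/(1-\alpha)$ (resp. $2qn$) is $O(n^\alpha)$, which is $o$ of the respective normalizer, so $\mathbb{E}[X_n]$ may be replaced by its leading term.

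The main obstacle I anticipate is the clean verification of Stout's hypotheses in the marginal case $\alpha=1/2$, where $s_n^2$ grows only like $\log n$: one must ensure that the random predictable variation is asymptotically equivalent to a deterministic sequence — this is where the almost-sure convergence $\mu_k\to\mu_\infty$ from Theorem \ref{lln}, combined with the Toeplitz lemma, is essential — and that the decaying increments still meet the negligibility requirement despite the very slow growth of $s_n^2$. The remaining computations, namely the Gamma-function asymptotics for $a_n$ and the bookkeeping of the logarithmic factors, are routine.
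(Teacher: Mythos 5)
Your proposal is correct and follows essentially the same route as the paper: the same underlying martingale $a_n\bigl(X_n-\mathbb{E}[X_n]\bigr)$ (your $a_n=\Gamma(n)/\Gamma(n+\alpha)$ is, up to a constant, the reciprocal of the paper's normalizer), the same identification of the predictable quadratic variation via Theorem \ref{lln}, and the same application of Stout's martingale LIL (Theorem \ref{St2.thm.1-2}), with the normalizers $\sqrt{2n\log\log n}$ and $\sqrt{2n\log n\,\log\log\log n}$ arising exactly as in the paper from $s_n^2\asymp n^{1-2\alpha}$ versus $s_n^2\asymp\log n$. The only blemish is cosmetic: $\Gamma(n)/\Gamma(n+\alpha)\sim n^{-\alpha}$ without the factor $1/\Gamma(1+\alpha)$ you wrote, but since you carry this constant consistently it cancels in the ratio $s_n/a_n$ and all stated constants come out unaffected.
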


We now state our main result. It deals with the case $q =0$ and 
$1/2 < p < 1$ where the random walk exhibits a superdiffusion regime.

\begin{theorem}\label{q0thm}
Let $(X_n)_{n \geq 1}$ be the minimal random walk. If $ q = 0$ and $ 1/2 <p<1$, then
\begin{eqnarray}
\frac{X_n}{n^{p} \Gamma(1+ p)^{-1}} - s \to M \text{ a.s. and in }  L^m  \text{ for } m \geq 1 \nonumber, 
\end{eqnarray}
where $M$ is a non-normal random variable such that 
\begin{align*}
\mathbb{E}(M) & = 0\\
\mathbb{E}(M^2) & =  \frac{2s \Gamma(1+p)^2}{\Gamma(1+ 2p)} - s^2  \\
\mathbb{E}(M^3) & = \frac{6s \Gamma(1+p)^3}{\Gamma(1+ 3p)} - \frac{6s^2 \Gamma(1+p)^2}{\Gamma(1+ 2p)} + 2s^3 \\
\mathbb{E}(M^4) & =  \frac{24s \Gamma(1+p)^4}{\Gamma(1+ 4p)} - \frac{24s^2 \Gamma(1+p)^3}{\Gamma(1+ 3p)} + \frac{12s^3 \Gamma(1+p)^2}{\Gamma(1+ 2p)} - 3s^4.
\end{align*}
\end{theorem}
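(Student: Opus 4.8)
The plan is to realize $X_n$, suitably normalized, as a nonnegative martingale and to identify the limit through its moments. Since $q=0$ forces $\alpha=p$, the conditional probability \eqref{conditional} gives $\mathbb{E}[\eta_{n+1}\mid\mathcal{F}_n]=p\,X_n/n$, so that $\mathbb{E}[X_{n+1}\mid\mathcal{F}_n]=(1+p/n)X_n$ for $n\ge 1$. Setting $a_n=\Gamma(n+p)/(\Gamma(1+p)\Gamma(n))$, which satisfies $a_{n+1}/a_n=(n+p)/n$ and $a_n\sim n^{p}/\Gamma(1+p)$, the process $M_n:=X_n/a_n$ is a nonnegative martingale with $\mathbb{E}[M_n]=\mathbb{E}[X_1]/a_1=s$. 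By the martingale convergence theorem $M_n\to M_\infty$ almost surely, and because $a_n\sim n^{p}/\Gamma(1+p)$ this is precisely the almost sure convergence of $X_n/(n^{p}\Gamma(1+p)^{-1})$. I then set $M:=M_\infty-s$, so that $\mathbb{E}[M]=0$ as soon as $L^1$ convergence is in hand.

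To upgrade to $L^m$ convergence and to compute moments I would study $u_n^{(m)}:=\mathbb{E}[X_n^m]$. Expanding $X_{n+1}^m=(X_n+\eta_{n+1})^m$ and using $\eta_{n+1}^j=\eta_{n+1}$ for $j\ge 1$ together with $\mathbb{E}[\eta_{n+1}\mid\mathcal{F}_n]=pX_n/n$ yields
\begin{align*}
u_{n+1}^{(m)}=\Big(1+\tfrac{mp}{n}\Big)u_n^{(m)}+\frac{p}{n}\sum_{j=2}^{m}\binom{m}{j}u_n^{(m-j+1)}.
\end{align*}
Dividing by $c_n^{(m)}:=\Gamma(n+mp)/(\Gamma(1+mp)\Gamma(n))$, which obeys $c_{n+1}^{(m)}/c_n^{(m)}=(n+mp)/n$, converts this into a telescoping sum for $u_n^{(m)}/c_n^{(m)}$ with summand of order $k^{(1-j)p-1}$. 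An easy induction on $m$, using $u_k^{(i)}=O(k^{ip})$ for $i<m$, shows these sums converge; hence $u_n^{(m)}=O(n^{mp})$ and $\sup_n\mathbb{E}[M_n^m]<\infty$ for every $m$. Boundedness of all moments gives uniform integrability of $\{M_n^k\}$ and therefore $L^k$ convergence of $M_n$ for all $k$, so in particular $\mathbb{E}[M_\infty]=s$ and $\mathbb{E}[M_n^k]\to\mathbb{E}[M_\infty^k]$.

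The quantitative heart is the exact evaluation of these limits, powered by the identity
\begin{align*}
\frac{\Gamma(k+rp)}{\Gamma(k+1+mp)}=\frac{1}{(m-r)p}\left[\frac{\Gamma(k+rp)}{\Gamma(k+mp)}-\frac{\Gamma(k+1+rp)}{\Gamma(k+1+mp)}\right],\qquad r<m,
\end{align*}
which follows from $\Gamma(k+1+rp)=(k+rp)\Gamma(k+rp)$. Starting from $u_n^{(1)}=s\,a_n$ and proceeding by induction, with the summation constant fixed by $u_1^{(m)}=\mathbb{E}[\eta_1]=s$, this identity collapses every sum and expresses $u_n^{(m)}$ as a linear combination of the functions $\Gamma(n+rp)/\Gamma(n)$, $1\le r\le m$. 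Dividing by $a_n^m$ and extracting the leading $r=m$ coefficient delivers
\begin{align*}
w_m:=\lim_{n\to\infty}\mathbb{E}[M_n^m]=\mathbb{E}[M_\infty^m]=\frac{m!\,s\,\Gamma(1+p)^{m}}{\Gamma(1+mp)}.
\end{align*}
Carrying this out for $m\le 4$ and expanding $\mathbb{E}[M^k]=\sum_{j=0}^{k}\binom{k}{j}(-s)^{k-j}w_j$ (with $w_0=1$, $w_1=s$) reproduces the four displayed formulas.

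The obstacle I anticipate is purely computational bookkeeping: for $m=3,4$ the recursion mixes several lower moments, so the telescoping must be applied term by term and the resulting $\Gamma(n+rp)/\Gamma(n)$ contributions combined with care. Once $w_2,w_3,w_4$ are known, non-normality is immediate, since any centered Gaussian satisfies $\mathbb{E}[M^3]=0$ and $\mathbb{E}[M^4]=3\,(\mathbb{E}[M^2])^2$, while the computed moments violate these relations; thus $M$ cannot be normal.
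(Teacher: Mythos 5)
Your proposal is correct, and while its computational core coincides with the paper's, the convergence argument follows a genuinely different route. The paper works with the centered martingale $(X_n-\mathbb{E}[X_n])/a_n$, bounds its differences by $|D_j|\le 2/a_j$, and invokes Burkholder's inequality together with the fact that $\sum_n a_n^{-2}<\infty$ exactly when $\alpha>1/2$ (Lemma \ref{diverges}); this gives $\sup_n\mathbb{E}|M_n|^m<\infty$ in one stroke and then the $L^m$ martingale convergence theorem yields both a.s.\ and $L^m$ convergence. You instead observe that $X_n/a_n$ is a \emph{nonnegative} martingale, so a.s.\ convergence is free from Doob's theorem, and you extract the $L^m$ bounds from the moment recursion $u_{n+1}^{(m)}=(1+mp/n)u_n^{(m)}+\frac{p}{n}\sum_{j\ge 2}\binom{m}{j}u_n^{(m-j+1)}$ itself, by induction on $m$ — the same recursion you need anyway for the exact moments. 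Your telescoping identity $\Gamma(k+rp)/\Gamma(k+1+mp)=\frac{1}{(m-r)p}\bigl[\Gamma(k+rp)/\Gamma(k+mp)-\Gamma(k+1+rp)/\Gamma(k+1+mp)\bigr]$ is precisely the paper's Lemma \ref{gammalemma} in telescoped form, and your limits $w_m=m!\,s\,\Gamma(1+p)^m/\Gamma(1+mp)$ match the paper's values for $m\le 4$, so the binomial expansion reproduces the four displayed moments. What your route buys: the $p>1/2$ hypothesis never enters your convergence argument (the recursion summands are of order $k^{(1-j)p-1}$ with $j\ge 2$, summable for every $p\in(0,1)$), so you actually establish the convergence statement on the whole range $q=0$, $0<p<1$, whereas the paper's Burkholder-plus-Lemma-\ref{diverges} mechanism is intrinsically tied to the superdiffusive regime. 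What the paper's route buys: no induction over all moments is needed, since a single deterministic bound on the martingale differences controls every $L^m$ norm at once. One caveat applying equally to you and to the paper: "the computed moments violate the Gaussian relations" is asserted rather than checked; since $\mathbb{E}[M^3]$ can approach $0$ as $p\to 1$ for certain $s$, a fully rigorous non-normality claim should verify that the third- and fourth-moment conditions cannot hold simultaneously for any admissible $(s,p)$, but this gap is inherited from the paper, not introduced by you.
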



\begin{remark}
We note that the same approach employed in the proof of Theorem \ref{q0thm} works when 
the parameters are $q > 0$ and $\alpha > 1/2$ and the model exhibits superdiffusion 
behavior. This occurs, basically, because the variance of the martingale $M_n$, which 
is related to $X_n$ and is introduced in Subsection \ref{pre}, converges as $n \to \infty$. 
\end{remark}

\section{Proofs}\label{proofs}

\subsection{Preliminaries}\label{pre}

We know from \eqref{conditional} that  
\begin{align}\label{cond3}
\mathbb{P}[\eta_{n+1} = 1|\eta_1, \ldots, \eta_n] = q + \alpha \frac{X_n}{n}.
\end{align}
Put 
\begin{align*}
a_1 = 1 \quad  \text{ and } \quad a_n = \prod_{j=1}^{n-1} \left(1+\frac{\alpha}{j}\right) \, \text{ for } n \geq 2 \, \text{ and } \, \alpha > -1.
\end{align*} 
Note that 
\begin{align}\label{approx}
a_{n} = \frac{\Gamma(n + \alpha )}{\Gamma(n)\Gamma(\alpha +1)} \sim \frac{n^{\alpha}}{\Gamma(1 + \alpha)} \, \text{ as } n \to \infty.
\end{align}
and that $a_n \to \infty$ as $n \to \infty$ if $\alpha  > 0$, $a_n =1$ for $n \geq 1$ if $\alpha = 0$, and $a_n \to 0 $ if 
$\alpha < 0$.

Define the filtration $\mathcal{F}_n=\sigma(\eta_1, \ldots, \eta_n)$ and $M_n = \frac{X_n - \mathbb{E}[X_n]}{a_n}$ for $n \geq 1$. 
We claim that $\{ M_n \}_{n \geq 1}$ is a martingale with respect to $\{ \mathcal{F}_n \}_{n \geq 1}$, for
\begin{eqnarray}
\mathbb{E}[M_{n+1}| \mathcal{F}_n] &=& \frac{(X_n - \mathbb{E}[X_n])}{a_{n+1}} + \frac{\mathbb{E}[\eta_{n+1}|\mathcal{F}_n] - \mathbb{E}[\eta_{n+1}]}{a_{n+1}} \nonumber \\
&=& \frac{(X_n - \mathbb{E}[X_n])}{a_{n+1}} + \frac{\alpha \frac{X_n}{n} -\alpha \frac{\mathbb{E}[X_n]}{n}}{a_{n+1}} \nonumber \\
&=& \frac{(X_n - \mathbb{E}[X_n])}{a_{n+1}} + \frac{\frac{\alpha}{n}(X_n - \mathbb{E}[X_n])}{a_{n+1}} \nonumber \\
&=& (X_n - \mathbb{E}[X_n]) \frac{\left(1+\frac{\alpha}{n}\right)}{a_{n+1}} \nonumber \\
&=& M_n .\nonumber
\end{eqnarray}
Note that \eqref{cond3} amounts to $\mathbb{P}(\eta_{n+1} = 1| \mathcal{F}_n) = q + \alpha \frac{X_n}{n}$.

The walker jumps to the right at the $n$-th step time with probability $p_n = \mathbb{E}[\eta_n]$. Then combining (\ref{conditional}) and (\ref{mean}), we have that
\begin{equation} \label{pn}
p_n = \frac{q}{1-\alpha} + \alpha\frac{a_{n-1}}{n-1}\left( s-\frac{q}{1-\alpha} \right) \text{ for }\alpha< 1.
\end{equation}

A direct consequence of \eqref{approx} is the following result.
\begin{lemma}\label{diverges}
The series $\displaystyle \sum_{n=1}^{\infty} \frac{1}{a^2_n}$ converges if and only if $\alpha > 1/2$.
\end{lemma}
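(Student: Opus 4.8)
The plan is to reduce the question to the convergence of a $p$-series via the limit comparison test, using the precise asymptotic for $a_n$ already recorded in \eqref{approx}. Since that line states $a_n \sim n^{\alpha}/\Gamma(1+\alpha)$ as $n \to \infty$, squaring and taking reciprocals formally gives $1/a_n^2 \sim \Gamma(1+\alpha)^2 / n^{2\alpha}$, which already points to $2\alpha$ as the exponent governing convergence.

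To make this rigorous I would first form the ratio
\begin{align*}
\frac{1/a_n^2}{1/n^{2\alpha}} = \frac{n^{2\alpha}}{a_n^2}
\end{align*}
and observe that, by \eqref{approx}, it tends to $\Gamma(1+\alpha)^2$ as $n \to \infty$. Because $\alpha > -1$ on the whole relevant range, $\Gamma(1+\alpha)$ is a finite positive number, so this limit is a strictly positive finite constant. The limit comparison test then guarantees that $\sum_{n} 1/a_n^2$ and $\sum_{n} 1/n^{2\alpha}$ share the same convergence behavior. Finally I would invoke the standard $p$-series criterion: $\sum_{n} 1/n^{2\alpha}$ converges if and only if $2\alpha > 1$, that is, $\alpha > 1/2$. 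Combining this equivalence with the limit comparison yields the claim.

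As for obstacles, there are none of real substance here; the statement is labelled a direct consequence of \eqref{approx}, and indeed it is. The only point deserving a moment's care is verifying that the comparison constant $\Gamma(1+\alpha)^2$ is genuinely finite and nonzero, so that the two-sided limit comparison test applies (rather than merely a one-sided domination argument) across the entire parameter set; this is immediate from $\alpha > -1$, which holds throughout since $a_n$ is only defined in that regime.
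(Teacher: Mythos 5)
Your proof is correct and matches the paper's approach: the paper gives no separate argument, stating only that the lemma is ``a direct consequence of \eqref{approx}'', and your limit-comparison argument with the $p$-series $\sum_n n^{-2\alpha}$ is precisely the rigorous version of that remark. The check that $\Gamma(1+\alpha)^2$ is finite and strictly positive for $\alpha>-1$ is the right (and only) point of care, and you handle it correctly.
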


Let $(D_n)_{n \geq1}$ be the martingale differences defined by $D_1 = M_1$ and, for $n \geq 2, D_n = M_n - M_{n-1}$.
Observe that
\begin{eqnarray}\label{Dj}
D_j &=&  \frac{X_j - \mathbb{E}[X_j]}{a_j} - \frac{X_{j-1} - \mathbb{E}[X_{j-1}]}{a_{j-1}} \nonumber \\
&=& \frac{\eta_j - \mathbb{E}[\eta_j]}{a_j} - \frac{\left(X_{j-1} - \mathbb{E}[X_{j-1}]\right)}{j-1} \frac{\alpha}{a_j} . 
\end{eqnarray}
Furthermore, since the increments $\eta_j$'s are uniformly bounded, it is trivial to see that 
\begin{align}\label{2aj}
|D_j| \leq \frac{2}{a_j}  \quad  \mbox{a.s.}
\end{align}

Let us state a well known result involving the gamma function and which will be used afterwards.
\begin{lemma}\label{gammalemma}
For any non-negative real numbers $a$ and $b$ such that $b \neq a +1$ and for all $n \geq 1$, 
we have
\begin{align}\label{gammasum}
\sum_{k=1}^{n} \frac{\Gamma(k + a)}{\Gamma(k + b)} = \frac{\Gamma(n + a +1)}{(b - a -1)\Gamma(n +b)}
\left( \frac{\Gamma(n+b) \Gamma(1+ a)}{\Gamma(n + a +1)\Gamma(b)} - 1 \right).
\end{align}
\end{lemma}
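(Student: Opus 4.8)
The plan is to prove the identity by recognizing the summand as a telescoping difference. The natural candidate is the function
\[
f(k) = \frac{1}{b - a - 1} \cdot \frac{\Gamma(k + a)}{\Gamma(k + b - 1)},
\]
which is well defined precisely because the hypothesis $b \neq a + 1$ guarantees $b - a - 1 \neq 0$. First I would verify that
\[
\frac{\Gamma(k + a)}{\Gamma(k + b)} = f(k) - f(k+1)
\]
for every $k \geq 1$. This reduces to a short computation using only the functional equation $\Gamma(z+1) = z\,\Gamma(z)$: writing $\Gamma(k+1+a) = (k+a)\Gamma(k+a)$ and $\Gamma(k+b) = (k+b-1)\Gamma(k+b-1)$, one factors out $\Gamma(k+a)/\Gamma(k+b-1)$ and checks that the remaining bracket collapses to $(b - a - 1)/(k + b - 1)$, which is exactly what is needed.

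Once the telescoping identity is established, summing over $k$ from $1$ to $n$ gives
\[
\sum_{k=1}^{n} \frac{\Gamma(k + a)}{\Gamma(k + b)} = f(1) - f(n+1) = \frac{1}{b - a - 1}\left( \frac{\Gamma(1 + a)}{\Gamma(b)} - \frac{\Gamma(n + 1 + a)}{\Gamma(n + b)} \right).
\]
The final step is purely cosmetic: I would factor $\Gamma(n + a + 1)/\bigl((b - a - 1)\Gamma(n + b)\bigr)$ out of this expression to recover the exact form displayed in \eqref{gammasum}.

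There is essentially no analytic obstacle here; the only mildly delicate point is guessing the correct telescoping function $f$, after which everything follows from the Gamma recursion. An equivalent route would be induction on $n$, where the base case $n = 1$ is immediate and the inductive step amounts to the same algebraic identity proved above; I would favour the telescoping presentation since it makes the role of the hypothesis $b \neq a + 1$ transparent and avoids carrying the inductive hypothesis through the calculation.
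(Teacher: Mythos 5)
Your proof is correct. The telescoping computation checks out: with $f(k) = \frac{1}{b-a-1}\,\frac{\Gamma(k+a)}{\Gamma(k+b-1)}$, the Gamma recursion gives
\[
f(k) - f(k+1) = \frac{\Gamma(k+a)}{(b-a-1)\,\Gamma(k+b-1)}\left(1 - \frac{k+a}{k+b-1}\right) = \frac{\Gamma(k+a)}{\Gamma(k+b)},
\]
and summing from $1$ to $n$ yields exactly the right-hand side of \eqref{gammasum} after factoring out $\Gamma(n+a+1)/\bigl((b-a-1)\Gamma(n+b)\bigr)$. Note that the paper itself states this lemma as a ``well known result'' and offers no proof at all, so there is no in-paper argument to compare against; your telescoping derivation supplies precisely what the authors omit, and it does so cleanly, making the role of the hypothesis $b \neq a+1$ explicit. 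One pedantic remark: both your $f(1)$ and the lemma's own right-hand side involve $\Gamma(b)$, so the hypothesis ``non-negative'' should really read $b > 0$ (or one adopts the convention $1/\Gamma(0) = 0$); this is a blemish in the lemma's statement rather than in your argument, and in the paper's applications $b$ is always of the form $1 + jp$ with $p > 1/2$, so nothing is affected.
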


\subsection{Auxiliary results for martingale differences}

In the course of the proofs of the limit theorems we will use some known results from 
martingale theory. We state the following theorems and their proofs can be found in the 
cited references. Let $(M_n, \mathcal{F}_n)_{n \geq1}$ be a given martingale and let 
$(D_n)_{n \geq1}$ be its associated sequence of martingale differences.

\begin{theorem}{\cite[Theorem~3.3.1]{St1}}\label{St1.thm.3.3.1}
Let $b_j$ be $\mathcal{F}_{j-1}$-measurable for each $j \geq 1$ such that $0 < b_j \nearrow \infty ~~\mbox{a.s.}$ If
\[\sum_{j=1}^\infty \frac{\E \left[ |D_j|^r | \mathcal{F}_{j-1} \right]}{b_j^r} < \infty \quad \mbox{for some } 0 < r \leq 2,\]
then
\[{M_n}/{b_n} \to 0 \quad \mbox{a.s.}\]
\end{theorem}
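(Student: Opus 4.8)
The plan is to reduce the stated strong law to the almost sure convergence of a single martingale series, and then extract that convergence from the conditional moment bound. Write $Y_j = D_j/b_j$. Since $b_j$ is $\mathcal{F}_{j-1}$-measurable and $(D_j)$ are martingale differences, $\E[Y_j \mid \mathcal{F}_{j-1}] = \E[D_j \mid \mathcal{F}_{j-1}]/b_j = 0$, so $(Y_j)$ is itself a sequence of martingale differences and $S_n := \sum_{j=1}^n Y_j$ is a martingale. Because $b_j$ is $\mathcal{F}_{j-1}$-measurable we may pull $b_j^r$ inside the conditional expectation, so the hypothesis reads
\[
\sum_{j\ge1} \frac{\E[|D_j|^r \mid \mathcal{F}_{j-1}]}{b_j^r} = \sum_{j\ge1} \E[|Y_j|^r \mid \mathcal{F}_{j-1}] < \infty \quad \text{a.s.}
\]
The key structural observation is then Kronecker's lemma: since $0 < b_n \nearrow \infty$ a.s., on the event where $\sum_{j\ge1} Y_j = \sum_{j\ge1} D_j/b_j$ converges we automatically obtain $b_n^{-1}\sum_{j=1}^n D_j = M_n/b_n \to 0$. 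Thus the whole theorem follows once I show that $S_n$ converges almost surely, and I would split the argument according to the value of the fixed exponent $r$.

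For $0 < r \le 1$ I would avoid centering altogether and argue absolute convergence. Applying the standard comparison that for non-negative adapted variables $Z_j$ the series $\sum_j Z_j$ and its predictable compensator $\sum_j \E[Z_j\mid\mathcal{F}_{j-1}]$ are a.s. finite on the same event (proved by truncating the Doob martingale $\sum_j (Z_j - \E[Z_j\mid\mathcal{F}_{j-1}])$ at a predictable stopping time), with $Z_j = |Y_j|^r$, gives $\sum_j |Y_j|^r < \infty$ a.s. Hence $|Y_j| \to 0$, so eventually $|Y_j| \le 1$, and on those indices $|Y_j| \le |Y_j|^r$ because $r \le 1$; therefore $\sum_j |Y_j| < \infty$ a.s. and $S_n$ converges absolutely.

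For $1 < r \le 2$ I would invoke the martingale three-series theorem, which guarantees a.s. convergence of $\sum_j Y_j$ once, for some truncation level (take $c=1$), both $\sum_j \E[Y_j^2 \mathbf{1}_{\{|Y_j|\le 1\}}\mid\mathcal{F}_{j-1}] < \infty$ and $\sum_j \E[|Y_j|\mathbf{1}_{\{|Y_j|>1\}}\mid\mathcal{F}_{j-1}] < \infty$ hold a.s. (these in turn control the conditional tail probabilities and the conditional centering terms). Both are immediate: on $\{|Y_j|\le 1\}$ one has $Y_j^2 \le |Y_j|^r$ since $r \le 2$, and on $\{|Y_j|>1\}$ one has $|Y_j| \le |Y_j|^r$ since $r \ge 1$, so integrating conditionally bounds each series by $\sum_j \E[|Y_j|^r\mid\mathcal{F}_{j-1}] < \infty$ a.s.

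The main obstacle is precisely the range $0 < r < 1$: there the large-values contribution in the three-series criterion cannot be dominated by the $r$-th conditional moment, since on $\{|Y_j|>1\}$ one has $|Y_j| > |Y_j|^r$, so the truncation and centering route breaks down and one is forced through the absolute-convergence argument above. That argument rests entirely on the conditional Borel--Cantelli-type comparison between $\sum_j |Y_j|^r$ and its predictable compensator, and verifying cleanly that a.s. finiteness of the compensator forces a.s. finiteness of the sum is the delicate ingredient — it is exactly what lets the conclusion hold for arbitrarily small $r$.
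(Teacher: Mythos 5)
The paper offers no proof of this statement: it is imported verbatim from Stout \cite[Theorem~3.3.1]{St1}, so there is no in-paper argument to diverge from. Your proof is correct, and it is essentially the classical argument behind Stout's theorem: normalize to $Y_j = D_j/b_j$, which remains a martingale difference sequence precisely because $b_j$ is $\mathcal{F}_{j-1}$-measurable, prove a.s.\ convergence of $\sum_j Y_j$, and finish with Kronecker's lemma applied pathwise (legitimate since $0 < b_n \nearrow \infty$ a.s.). Your split at $r=1$ is also the standard one: for $1 < r \le 2$ the conditional three-series theorem with truncation level $1$ works exactly as you say --- the tail series is controlled via $\P(|Y_j|>1 \mid \mathcal{F}_{j-1}) \le \E[|Y_j|\mathbf{1}_{\{|Y_j|>1\}} \mid \mathcal{F}_{j-1}]$ and the centering terms via $|\E[Y_j\mathbf{1}_{\{|Y_j|\le 1\}} \mid \mathcal{F}_{j-1}]| = |\E[Y_j\mathbf{1}_{\{|Y_j|>1\}} \mid \mathcal{F}_{j-1}]|$, using $\E[Y_j \mid \mathcal{F}_{j-1}]=0$ --- and this route is equivalent to invoking Chow's local convergence theorem, which is how Stout argues. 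One caution: your auxiliary lemma is over-stated. For non-negative adapted but unbounded $Z_j$, the events $\{\sum_j Z_j < \infty\}$ and $\{\sum_j \E[Z_j \mid \mathcal{F}_{j-1}] < \infty\}$ need \emph{not} coincide: take $Z_j$ independent with $\P(Z_j = 4^j) = 2^{-j}$ and $Z_j = 0$ otherwise, so that $\sum_j Z_j < \infty$ a.s.\ by Borel--Cantelli while $\sum_j \E[Z_j \mid \mathcal{F}_{j-1}] = \sum_j 2^j = \infty$. What holds in general is only the inclusion $\{\sum_j \E[Z_j \mid \mathcal{F}_{j-1}] < \infty\} \subseteq \{\sum_j Z_j < \infty\}$ up to null sets, proved exactly by your predictable truncation: with $\tau_c = \inf\{n : \sum_{j \le n+1} \E[Z_j \mid \mathcal{F}_{j-1}] > c\}$ one gets $\E\bigl[\sum_{j \le \tau_c \wedge n} Z_j\bigr] \le c$, hence finiteness on $\{\tau_c = \infty\}$, and one lets $c \to \infty$. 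Fortunately this valid direction is the only one your $0 < r \le 1$ argument uses, so the proof stands as written; I would simply restate the lemma as a one-sided implication.
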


\begin{theorem}{\cite[Corollary~3.1]{HH}}\label{HH.cor.3.1}
Let $\left\lbrace \sum_{k=1}^i D_{nk}, \mathcal{F}_{ni}, 1 \leq i \leq k_n , n \geq 1 \right\rbrace$ be a martingale array with $\mathcal{F}_{ni} \subset \mathcal{F}_{(n+1)i}$ , $\sum_{k=1}^i D_{nk} \in L^2$ and $\mathbb{E}\left[\sum_{k=1}^i D_{nk}\right]=0$ for all $i \in \{1, 2, \dots, k_n\}$, $n \geq 1$. Let $\sigma^2$ be an almost surely finite random variable. If

\begin{enumerate}[a)]
    \item  for all $\varepsilon >0$, $\sum_{j = 1}^{k_n}\mathbb{E}(D^{2}_{nj} \mathbb{I} (|D_{nj}| > \varepsilon) | \mathcal{F}_{j-1}) \xrightarrow{p} 0$ (conditional Lindeberg condition),  where $\mathbb{I}(A)$ is the indicator function
    
    \item $V_{n k_n}^2 =\sum_{j = 1}^{k_n}\mathbb{E}(D^{2}_{nj} | \mathcal{F}_{j-1}) \xrightarrow{p} \sigma^2$
\end{enumerate}

then
\[ \sum_{k=1}^{k_n} D _{nk} \xrightarrow{d} Z\]
where the characteristic function of $Z$ is given by $\phi_Z(t) = \E\left[ \exp\left({-\frac{1}{2}\sigma^2t^2}\right) \right]$. In particular, if $\sigma^2$ is almost surely a constant, $Z \stackrel{d}{=} N(0, \sigma^2)$.
\end{theorem}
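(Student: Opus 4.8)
The statement is the Hall--Heyde martingale central limit theorem, so the plan is to reproduce its proof by characteristic functions, organized around McLeish's multiplicative (complex-exponential) technique. Fix $t \in \mathbb{R}$ and abbreviate $S_n = \sum_{k=1}^{k_n} D_{nk}$; the goal is to show that $\mathbb{E}[e^{itS_n}] \to \mathbb{E}[e^{-\frac{1}{2}\sigma^2 t^2}]$. Since the right-hand side is the characteristic function of a variance mixture of centered normals, L\'evy's continuity theorem then delivers exactly the asserted convergence in distribution, and the particular case $\sigma^2$ constant reads off as $Z \sim N(0,\sigma^2)$.

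First I would extract from hypotheses (a) and (b) two auxiliary facts. The conditional Lindeberg condition (a) forces asymptotic negligibility, $\max_{1 \le j \le k_n}|D_{nj}| \xrightarrow{p} 0$, because for each $\varepsilon > 0$ the event $\{\max_j |D_{nj}| > \varepsilon\}$ is controlled by $\sum_j \mathbb{E}[D_{nj}^2 \mathbb{I}(|D_{nj}| > \varepsilon) \mid \mathcal{F}_{j-1}]$ through a conditional Chebyshev bound. Combining (a) and (b) with a truncation argument then yields that the quadratic variation agrees asymptotically with the conditional variance, $[M]_n := \sum_j D_{nj}^2 \xrightarrow{p} \sigma^2$, since one checks that $\sum_j \big(D_{nj}^2 - \mathbb{E}[D_{nj}^2 \mid \mathcal{F}_{j-1}]\big) \xrightarrow{p} 0$.

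Next I would introduce the complex product $T_n = \prod_{j=1}^{k_n}(1 + it\,D_{nj})$. Because $\mathbb{E}[1 + it\,D_{nj} \mid \mathcal{F}_{j-1}] = 1$, its partial products form a mean-one martingale; after a preliminary truncation of the $D_{nj}$ (replacing them by recentred truncations, which by negligibility does not affect the limit) one secures the $L^1$-control that makes $T_n$ a manageable object. The heart of the argument is McLeish's expansion $e^{it D_{nj}} = (1 + it\,D_{nj})\exp\!\big(-\tfrac12 t^2 D_{nj}^2 + r_{nj}\big)$ with $|r_{nj}| \le C|t D_{nj}|^3$ on $\{|tD_{nj}| \le 1\}$; multiplying over $j$ gives
\[
e^{itS_n} = T_n\exp\!\Big(-\tfrac12 t^2 [M]_n + R_n\Big), \qquad R_n = \sum_j r_{nj}.
\]
By the previous step $-\tfrac12 t^2[M]_n \xrightarrow{p} -\tfrac12 t^2\sigma^2$, while $|R_n| \le C|t|^3 \max_j|D_{nj}| \cdot [M]_n \xrightarrow{p} 0$; hence $e^{itS_n} = T_n\,e^{-\frac12 t^2\sigma^2}(1 + o_p(1))$.

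Finally I would take expectations and show the decoupling $\mathbb{E}\big[T_n e^{-\frac12 t^2\sigma^2}\big] \to \mathbb{E}\big[e^{-\frac12 t^2\sigma^2}\big]$, i.e. that the martingale factor $T_n$ separates from the random limit. This is precisely where the nesting hypothesis $\mathcal{F}_{ni} \subseteq \mathcal{F}_{(n+1)i}$ enters: it lets one replace $\sigma^2$ by a fixed $\mathcal{F}_{m0}$-measurable approximant $\xi_m$, against which $\mathbb{E}[(T_n - 1)\,\xi_m] \to 0$ follows from the mean-one property together with the uniform integrability obtained by truncation, and then one lets $m \to \infty$. I expect this last decoupling step --- producing the mixture characteristic function $\mathbb{E}[e^{-\frac12\sigma^2 t^2}]$ rather than a pure Gaussian when $\sigma^2$ is genuinely random --- to be the main obstacle, together with the accompanying $L^1$-control of $T_n$; when $\sigma^2$ is almost surely constant the decoupling is immediate and the conclusion $Z \sim N(0,\sigma^2)$ drops out at once.
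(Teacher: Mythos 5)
This statement is imported background: the paper offers no proof of it, explicitly deferring to Hall and Heyde \cite[Corollary~3.1]{HH}. Your sketch reproduces exactly the canonical argument of that reference (McLeish's product $\prod_{j}(1+it\,D_{nj})$, truncation, convergence of the quadratic variation $\sum_j D_{nj}^2$ to $\sigma^2$, and decoupling of the mixture via the nested $\sigma$-fields), so it is essentially the same approach as the paper's cited source; the only loose points --- deducing $\max_j |D_{nj}| \xrightarrow{p} 0$ from the conditional Lindeberg condition requires a stopping-time (Lenglart-type) argument rather than bare conditional Chebyshev, and the approximant of $\sigma^2$ should be taken $\mathcal{F}_{m,j}$-measurable for finite $j$ (with the first $j$ product factors handled separately via negligibility and uniform integrability) rather than $\mathcal{F}_{m0}$-measurable --- are standard to repair.
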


\begin{theorem}{\cite[Theorems~1-2]{St2}}\label{St2.thm.1-2}
Consider a sequence of positive constants $(s_n)_{n \geq 1}$ such that $\lim\limits_{n\to\infty}s_n = \infty$ and
\[\frac{1}{s_n^2} \displaystyle \sum_{j=1}^n \mathbb{E}[D_j^2 | \mathcal{F}_{j-1}] \xrightarrow{a.s} 1.\]
Define $u_n = \sqrt{2\log\log s_n^2}$. If there is a sequence of random variables $(K_n)_{n\geq 1}$ such that
\[|D_j| \leq K_j \frac{s_j}{u_j} \quad \mbox{a.s.}\]
where $\lim\limits_{n\to\infty}K_n = 0$ and each $K_n$ is $\mathcal{F}_{n-1}$-measurable, then
\[\limsup_{n\to\infty}\frac{M_n}{s_n u_n} = 1 \quad \mbox{a.s.}\]
\end{theorem}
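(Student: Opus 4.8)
The plan is to follow the classical Hartman--Wintner route adapted to martingales (this is essentially Stout's argument), establishing the two one-sided statements $\limsup_{n} M_n/(s_n u_n) \le 1$ and $\limsup_{n} M_n/(s_n u_n) \ge 1$ separately. The engine in both halves is an exponential (super/sub)martingale, together with the observation that, because $|D_j| \le K_j\, s_j/u_j$ with $K_j \to 0$, the normalized increments entering any exponential are uniformly small; this lets me replace each conditional moment generating function $\mathbb{E}[e^{\lambda D_j}\mid\mathcal{F}_{j-1}]$ by its Gaussian surrogate $\exp(\tfrac{1}{2}\lambda^2\,\mathbb{E}[D_j^2\mid\mathcal{F}_{j-1}])$ with a multiplicative error tending to $1$. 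Throughout I write $V_n^2 = \sum_{j=1}^n \mathbb{E}[D_j^2\mid\mathcal{F}_{j-1}]$, so the hypothesis reads $V_n^2/s_n^2 \to 1$ a.s.

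For the upper bound I fix $\varepsilon>0$ and $\theta>1$ and introduce the subsequence $n_k = \min\{n : s_n^2 \ge \theta^k\}$, so that $s_{n_k}^2 \sim \theta^k$ and hence $u_{n_k}^2 = 2\log\log s_{n_k}^2 \sim 2\log k$. Using the elementary inequality $e^x \le 1 + x + \tfrac{1}{2}x^2(1+\varepsilon)$ valid for $|x|$ small, together with $\lambda|D_j| \le \lambda K_j s_j/u_j$, I would verify that
\[
T_n(\lambda) \;=\; \exp\!\Big(\lambda M_n - \tfrac{1}{2}\lambda^2 (1+\varepsilon) V_n^2\Big)
\]
is a nonnegative supermartingale once the indices are large. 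Applying Doob's maximal inequality to $T_n(\lambda)$ on the block $n\le n_k$, with the near-optimal choice $\lambda \sim u_{n_k}/s_{n_k}$ and working on the almost-sure event where $V_{n_k}^2 \le (1+\varepsilon)s_{n_k}^2$, gives
\[
\mathbb{P}\Big(\max_{n\le n_k} M_n \ge (1+\eta)s_{n_k}u_{n_k}\Big) \;\le\; k^{-(1+\eta)^2/(1+\varepsilon)^2}.
\]
For $\eta>\varepsilon$ this is summable in $k$, so Borel--Cantelli yields $M_{n_k} \le (1+\eta)s_{n_k}u_{n_k}$ eventually; filling the gaps $n\in(n_{k-1},n_k]$ with the same maximal inequality and then sending $\theta\downarrow 1$, $\varepsilon\downarrow 0$, $\eta\downarrow 0$ produces $\limsup_n M_n/(s_n u_n)\le 1$ a.s.

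For the lower bound --- the genuinely hard part --- independence is unavailable, so the ordinary second Borel--Cantelli lemma does not apply. Instead I take a sparsely growing subsequence, again of the form $s_{n_k}^2\sim\theta^k$ but now with $\theta$ \emph{large}, and study the block increments $\Delta_k = M_{n_k}-M_{n_{k-1}}$. Because the increments are uniformly small, the conditional law of $\Delta_k/s_{n_k}$ given $\mathcal{F}_{n_{k-1}}$ is, on the event where $V_{n_k}^2-V_{n_{k-1}}^2 \sim s_{n_k}^2$, approximately a centred Gaussian of unit variance; a lower Gaussian tail estimate then gives
\[
\mathbb{P}\big(\Delta_k \ge (1-\eta)s_{n_k}u_{n_k}\mid\mathcal{F}_{n_{k-1}}\big)\;\ge\; c\,k^{-(1-\eta)^2}(\log k)^{-1/2},
\]
whose right-hand side sums to $\infty$ for every $\eta>0$. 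The conditional (L\'evy) Borel--Cantelli lemma then forces $\Delta_k \ge (1-\eta)s_{n_k}u_{n_k}$ infinitely often a.s. Since the upper bound already yields $|M_{n_{k-1}}| = O(s_{n_{k-1}}u_{n_{k-1}}) = o(s_{n_k}u_{n_k})$ when $\theta$ is large (because $s_{n_{k-1}}/s_{n_k}=\theta^{-1/2}\to 0$), it follows that $M_{n_k} \ge (1-2\eta)s_{n_k}u_{n_k}$ infinitely often, and letting $\eta\downarrow 0$, $\theta\to\infty$ gives $\limsup_n M_n/(s_n u_n)\ge 1$.

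The main obstacle is this lower half: producing the Gaussian minorization of the conditional tail of $\Delta_k$ with explicit control of the truncation and normal-approximation errors, and then legitimately invoking the conditional Borel--Cantelli lemma in the absence of independence. This is precisely where the hypothesis $K_n\to0$ is indispensable, as it guarantees that the normalized martingale differences are uniformly negligible, so that both the Gaussian domination used for the upper bound and the Gaussian minorization used for the lower bound hold with errors vanishing as $\varepsilon,\eta$ are sent to $0$. I would carry out all estimates on the single almost-sure event on which $V_n^2/s_n^2\to1$, intersecting it with the two Borel--Cantelli conclusions, so that the upper and lower halves combine on one set of full probability to give $\limsup_n M_n/(s_n u_n)=1$ a.s.
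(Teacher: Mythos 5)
The paper does not actually prove this statement: it is Stout's martingale law of the iterated logarithm, imported verbatim with the proof deferred to the reference \cite{St2}. So the relevant comparison is with Stout's original argument, and your outline follows the very same Kolmogorov-style two-halves strategy that Stout uses: an exponential supermartingale with Doob's maximal inequality and the first Borel--Cantelli lemma along geometrically spaced blocks for the upper half, and lower exponential bounds on block increments combined with L\'evy's conditional extension of the second Borel--Cantelli lemma for the lower half. Structurally the plan is the right one.

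That said, two steps in your sketch are genuine gaps, not routine details, and both are precisely where the hypotheses you under-use must enter. First, the supermartingale property of $T_n(\lambda)$ requires $\lambda|D_j|$ to be small \emph{deterministically}, whereas $K_j \to 0$ only almost surely, i.e.\ after a random time; your phrase ``once the indices are large'' hides this. The fix is the $\mathcal{F}_{j-1}$-measurability of $K_j$, which you never invoke: the truncated differences $D_j \mathbb{I}(K_j \le \delta)$ are still martingale differences (the indicator is predictable) and agree with $D_j$ eventually a.s., which converts almost-sure eventual smallness into the uniform smallness the exponential estimate needs. Second, in the lower half your conditioning is not well formed: the event $\{V_{n_k}^2 - V_{n_{k-1}}^2 \sim s_{n_k}^2\}$ is not $\mathcal{F}_{n_{k-1}}$-measurable (it depends on the whole block), so the assertion that ``the conditional law of $\Delta_k$ given $\mathcal{F}_{n_{k-1}}$ is approximately Gaussian on that event'' is not a statement to which L\'evy's conditional Borel--Cantelli lemma can be applied. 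Stout resolves this by replacing the deterministic block endpoints with stopping times at which the accumulated conditional variance crosses prescribed thresholds --- the small-increment hypothesis guaranteeing negligible overshoot --- and by proving a martingale analogue of Kolmogorov's \emph{lower} exponential bound for the stopped sums; you correctly flag this as the main obstacle, but as written the lower half remains a plan rather than a proof. A smaller quantitative slip: the block variance is $\sim (1-\theta^{-1})s_{n_k}^2$, so your tail exponent $(1-\eta)^2$ should be $(1-\eta)^2/(1-\theta^{-1})$, and divergence of the series holds only after choosing $\theta$ large relative to the fixed $\eta$; the order of quantifiers matters here and should be stated explicitly.
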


The last result which we shall need in what follows is the following:
\begin{theorem}{ \cite[Theorem 2.10]{HH}}\label{burkineq} 
If $(M_n, \mathcal{F}_n)_{n \geq1}$ is a martingale and $1 < m < \infty$, then there exists constants
$c_1$ and $c_2$ depending only on $m$ such that 

\begin{align*}
c_1 \,  \mathbb{E}(|\sum_{j = 1}^{n}D_j^{2}|^{m/2}) \leq 
\mathbb{E}(|M_n|^{m}) \leq c_2 \,  \mathbb{E}(|\sum_{j = 1}^{n}D_j^{2}|^{m/2}).
\end{align*}
\end{theorem}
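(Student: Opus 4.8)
The plan is to recognize that the left-hand side is, up to a deterministic factor tending to $1$, the martingale $M_n = (X_n - \mathbb{E}[X_n])/a_n$ from Subsection \ref{pre}. When $q=0$ we have $\alpha = p$, and \eqref{mean} reduces to $\mathbb{E}[X_n] = s\,\Gamma(n+p)/(\Gamma(1+p)\Gamma(n)) = s\,a_n$, so that $M_n = X_n/a_n - s$. By \eqref{approx}, $c_n := a_n\,\Gamma(1+p)/n^p \to 1$, and since $X_n/(n^p\Gamma(1+p)^{-1}) - s = (M_n+s)\,c_n - s$, it suffices to prove that $M_n \to M$ almost surely and in $L^m$ and to identify the moments of $M$; multiplying by the deterministic bounded factor $c_n\to1$ transfers both modes of convergence to the stated quantity.

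First I would establish the convergence of $M_n$. Because $p>1/2$, Lemma \ref{diverges} gives $\sum_n a_n^{-2} < \infty$, and the bound \eqref{2aj} yields $\mathbb{E}[M_n^2] = \sum_{j=1}^n \mathbb{E}[D_j^2] \le 4\sum_j a_j^{-2} < \infty$, so $(M_n)$ is an $L^2$-bounded martingale and converges almost surely and in $L^2$ to a limit $M$. To upgrade this to $L^m$ for every $m\ge1$ I would invoke Burkholder's inequality (Theorem \ref{burkineq}): since $\sum_{j=1}^n D_j^2 \le 4\sum_{j\ge1} a_j^{-2} =: C < \infty$ almost surely, one gets $\mathbb{E}[|M_n|^m] \le c_2\,C^{m/2}$ uniformly in $n$. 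Uniform boundedness in $L^{m+1}$ makes $(|M_n|^m)_n$ uniformly integrable, so the almost sure convergence promotes to $L^m$ convergence; in particular $\mathbb{E}[M_n^k]\to\mathbb{E}[M^k]$ for every $k$.

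Next I would compute the moments from recursions. Since $\eta_{n+1}\in\{0,1\}$ and $\mathbb{P}[\eta_{n+1}=1|\mathcal{F}_n] = pX_n/n$, writing $(X_n+\eta_{n+1})^k = X_n^k(1-\eta_{n+1}) + (X_n+1)^k\eta_{n+1}$ gives the clean identity $\mathbb{E}[X_{n+1}^k|\mathcal{F}_n] = X_n^k + (pX_n/n)[(X_n+1)^k - X_n^k]$. Taking expectations and setting $m_k(n):=\mathbb{E}[X_n^k]$ yields, for each $k$, the linear recursion
\[
m_k(n+1) = \left(1+\frac{kp}{n}\right)m_k(n) + \frac{p}{n}\sum_{j=1}^{k-1}\binom{k}{j-1}m_j(n),
\]
whose inhomogeneous part involves only the lower moments. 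Solving by variation of parameters with integrating factor $\Gamma(n+kp)/\Gamma(n)$, the increments telescope into sums of the form $\sum_j \Gamma(j+a)/\Gamma(j+b)$ evaluated in closed form by Lemma \ref{gammalemma}; the leading behaviour is $m_k(n)\sim C_k\,n^{kp}$ with $C_k$ finite, and since $\mathbb{E}[(M+s)^k] = \lim_n \mathbb{E}[(X_n/a_n)^k] = \lim_n m_k(n)/a_n^k = \Gamma(1+p)^k C_k$ by \eqref{approx} and the $L^m$ convergence, the binomial expansion of $(s+M)^k$ then solves recursively for $\mathbb{E}[M^k]$, starting from $\mathbb{E}[M]=0$ (also immediate from $\mathbb{E}[M_n]=0$ and $L^1$ convergence). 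As a check, for $k=2$ the recursion with $m_1(n)=s\,a_n$ and initial datum $m_2(1)=\mathbb{E}[X_1^2]=s$ gives $C_2 = 2s/\Gamma(1+2p)$, hence $\mathbb{E}[M^2] = 2s\Gamma(1+p)^2/\Gamma(1+2p) - s^2$, exactly as stated; the cases $k=3,4$ follow the identical scheme.

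The main obstacle is the bookkeeping in this last step: the recursions for $k=3,4$ couple to all lower moments, so each requires isolating the genuinely leading contribution of order $n^{kp}$ while verifying, via the convergence of the variation-of-parameters sums (the exponents $(i-k)p-1 < -1$ for $i\le k-1$ make them summable) and the closed form of Lemma \ref{gammalemma} together with the initial data $m_k(1)=\mathbb{E}[X_1^k]=s$, that the lower-moment terms assemble with the correct coefficients. Finally, to conclude that $M$ is non-normal I would note that a centered Gaussian has vanishing third moment and satisfies $\mathbb{E}[M^4]=3(\mathbb{E}[M^2])^2$; exhibiting that the displayed expression for $\mathbb{E}[M^3]$ does not vanish for $p\in(1/2,1)$ (equivalently that the fourth-moment relation fails) rules out normality.
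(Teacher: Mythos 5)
Your proposal does not prove the statement it was supposed to prove. The statement is Burkholder's square-function inequality for a general martingale $(M_n,\mathcal{F}_n)$ and arbitrary $1<m<\infty$: the two-sided bound $c_1\,\mathbb{E}\bigl(|\sum_{j=1}^n D_j^2|^{m/2}\bigr) \leq \mathbb{E}(|M_n|^m) \leq c_2\,\mathbb{E}\bigl(|\sum_{j=1}^n D_j^2|^{m/2}\bigr)$. This is a classical external result which the paper does not prove at all --- it is quoted verbatim from Hall and Heyde \cite[Theorem 2.10]{HH}. What you have written is instead a proof of Theorem \ref{q0thm}, the limit theorem for the minimal walk with $q=0$ and $1/2<p<1$. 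Worse, with respect to the assigned statement your argument is circular: in your second paragraph you explicitly invoke Theorem \ref{burkineq} to obtain the uniform $L^m$ bounds $\mathbb{E}(|M_n|^m)\leq c_2\,C^{m/2}$, i.e.\ you assume the very inequality you were asked to establish. Nothing specific to the minimal random walk (the identity $\mathbb{E}[X_n]=s\,a_n$, the bound $|D_j|\leq 2/a_j$, Lemma \ref{diverges}, the moment recursions via Lemma \ref{gammalemma}) can play any role in proving a theorem about arbitrary martingales and arbitrary difference sequences.

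A genuine proof of Burkholder's inequality requires machinery of an entirely different kind: for $m\geq 2$ one can get the upper bound from convexity and orthogonality arguments, but the general case $1<m<\infty$ (both directions) is usually handled by a good-$\lambda$ or distribution-function comparison between the martingale maximal function and the square function, by Davis-type decompositions, or by duality to pass from $m\geq 2$ to $1<m<2$; none of this appears in your proposal. For what it is worth, read as a blind proof of Theorem \ref{q0thm} your outline is sound and closely tracks the paper's actual argument for that theorem --- Burkholder plus $\sum_j a_j^{-2}<\infty$ for $p>1/2$ to get $\sup_n\mathbb{E}(|M_n|^m)<\infty$ and hence a.s.\ and $L^m$ convergence, then moment recursions closed out by Lemma \ref{gammalemma}, with your recursion check $C_2=2s/\Gamma(1+2p)$ matching \eqref{2m} --- but that is a different statement from the one assigned, and for the assigned one you have supplied no proof.
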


\subsection{Strong law of large numbers}
\begin{proof}[Proof of Theorem \ref{lln}]
The result is a straightforward consequence of Theorem \ref{St1.thm.3.3.1}. 
Define $b_i = i^{1 - \alpha}$
and take $r \in (1,2)$. 
Now use inequality \eqref{2aj} to observe that the conditions of the theorem are fulfilled. Thus we arrive at the desired conclusion.
\end{proof}

\subsection{Central limit theorem}

\begin{proof}[Proof of Theorem \ref{clt}]
Define 
\begin{align}\label{s2}
s^2_n := \sum_{j=1}^n \frac{p_j(1 - p_j)}{a^2_j} \, \text{ for } n \geq 1.
\end{align}
Note that Theorem \ref{lln} gives $\dfrac{X_n - \E[X_n]}{n} = \mbox{o}(1)$ (a.s.). Now equations \eqref{conditional} and \eqref{Dj} and the fact that $\eta_j^2 = \eta_j ~\mbox{a.s.}$ allow
us to conclude that
\begin{eqnarray}\label{d2}
\mathbb{E}\left[\left. D_j^2 \right| \mathcal{F}_{j-1} \right]
& = & \frac{1}{a_j^2}\mathbb{E}\left[\left. \left(\eta_j - \mathbb{E}[\eta_j] - \frac{X_{j-1} - \mathbb{E}[X_{j-1}]}{j-1}\alpha \right)^2 \right| \mathcal{F}_{j-1} \right] \nonumber \\
& = & \frac{1}{a_j^2}\mathbb{E}\left[\left. \eta_j^2-2p_j\eta_j +p_j^2\right| \mathcal{F}_{j-1} \right] + \mbox{o}\left(\frac{1}{a_j^2}\right) \nonumber \\
& = & \frac{p_j(1-p_j)}{a_j^2} + \mbox{o}\left(\frac{1}{a_j^2}\right) \text{ }\text{ a.s.}
\end{eqnarray}

Both claims in Theorem \ref{clt} amount to prove that  
\begin{eqnarray}\label{ansn}
\frac{X_n - \mathbb{E}[X_n]}{a_n s_n} \xrightarrow{d} N(0,1).
\end{eqnarray}
Recall that $\mathbb{E}(X_n) \sim \frac{qn}{1- \alpha}$ and note that \eqref{conditional2} implies 
\begin{align*}
\mathbb{P}(\eta_n = 1) \to \frac{q}{1 - \alpha}  \quad \mbox{ as } n \to \infty.
\end{align*}
Combining Lemma \ref{diverges} and (\ref{approx}) we get
\begin{align}
& s_n^{2} \sim  \frac{q(1-p)}{(1 - \alpha)^2} \Gamma(1+ \alpha)^{2} \frac{n^{1- 2\alpha}}{1- 2\alpha}, \text{ if } \alpha < 1/2  \label{sn1},\\
& s_n^{2} \sim 4 q(1-p)\Gamma(3/2)^{2} \log n, \text{ if } \alpha =1/2 \label{sn2}
\end{align}
which in turn implies that 
\begin{align}
& a_n s_n \sim \sqrt{\dfrac{q(1-p)n}{(1- \alpha)^2(1 - 2 \alpha)}}, \, \text{ if } \alpha < 1/2 \label{ansn1} \\
& a_n s_n \sim 2\sqrt{q(1 - p)n \log n}, \, \text{ if } \alpha =1/2. \label{ansn2}
\end{align}

We now check (\ref{ansn}). Let us verify that the two conditions of Theorem \ref{HH.cor.3.1} are fulfilled. Start with the conditional Lindeberg condition. Let 
$D_{nj} = \frac{D_j}{s_n}$ for $1 \leq j \leq n$. Given $\varepsilon >0$, we need to prove that
\begin{align}\label{tozero}
\sum_{j = 1}^{n}\mathbb{E}(D^{2}_{nj} \mathbb{I} (|D_{nj}| > \varepsilon) | \mathcal{F}_{j-1}) \to 0  \text{ a.s.}
\end{align}
If $\alpha  \geq 0$, then $a_n \geq 1$ and $s_n \to \infty$. Next note that 
$\lbrace |D_{nj}| > \varepsilon \rbrace \subset \lbrace \frac{2}{s_n} > \varepsilon \rbrace \mbox{ a.s}$,
but the last set is a.s. empty for $n$ large enough, so (\ref{tozero}) holds.
If $\alpha < 0 $, observe that $\lim_{n \to \infty }a_n \, s_n  = \infty$ and 
$a_j^{-1} \leq a_n^{-1}$ for $j= 1, \ldots, n$. Then it is easy to see that 
$\lbrace |D_{nj}| > \varepsilon \rbrace \subset \lbrace \frac{2}{a_n s_n} > \varepsilon \rbrace \mbox{ a.s.}$
and again the latter set is a.s. empty for $n$ sufficiently large.

Lemma \ref{diverges} and \eqref{s2} yield that $s_n \to \infty$ as $n \to \infty$ if and only 
if $\alpha \leq 1/2$. In view of this fact, (\ref{s2}), \eqref{d2} and Theorem \ref{lln} we may 
claim that
\begin{eqnarray}\label{as}
\frac{1}{s_n^2} \displaystyle \sum_{j=1}^n \mathbb{E}[D_j^2 | \mathcal{F}_{j-1}] \xrightarrow{a.s} 1.
\end{eqnarray}
In virtue of Theorem \ref{HH.cor.3.1}
we may conclude that
\begin{eqnarray}
\displaystyle \sum_{j=1}^n D_{nj} = \frac{X_n - \mathbb{E}[X_n]}{a_n s_n} \xrightarrow{d} N(0,1). \nonumber
\end{eqnarray}

\end{proof}

\subsection{Law of the iterated logarithm}

\begin{proof}[Proof of Theorem \ref{lil}]
Both claims in Theorem \ref{lil} follow from a straightforward application of Theorem \ref{St2.thm.1-2} to our random walk. Consider $K_n:= \frac{2 u_n}{a_n s_n}$; then we can verify that $\lim\limits_{n \to \infty} K_n = 0$ as a consequence of (\ref{sn1}), (\ref{sn2}), (\ref{ansn1}) and (\ref{ansn2}). Therefore, the proof of the result in item a) of Theorem \ref{lil} follows from \eqref{2aj}, \eqref{sn1} and \eqref{as} 
and the result in item b) of the same theorem follows from \eqref{2aj}, \eqref{sn2} and \eqref{as}.
\end{proof}

\subsection{Proof of Theorem \ref{q0thm}}

Notice that now
\begin{align}\label{condq0}
\mathbb{P}[\eta_{n+1} = 1|\mathcal{F}_n] = p \frac{X_n}{n} \quad \mbox{for} \ n \geq 1.
\end{align}
Equations \eqref{conditional2} and \eqref{mean} imply that $\mathbb{P}(\eta_n = 1) \to 0  \mbox{ as } n \to \infty$.
Moreover, we get from \eqref{mean} and \eqref{approx} that 
\begin{align*}
\mathbb{E}(X_n) =  sa_n \sim s\frac{n^{p}}{\Gamma(1+ p)}.
\end{align*}

As part of the proof of Theorem \ref{q0thm}, we need to compute the second, the third and 
the fourth moments of $X_n$.

\subsubsection{Computation of $\mathbb{E}(X_n^2)$, $\mathbb{E}(X_n^3)$ and $\mathbb{E}(X_n^4)$}


Let us start with $\mathbb{E}(X_n^2)$. Keep in mind that $\eta_j^2 = \eta_j ~\mbox{a.s.}$, so from \eqref{condq0} we obtain
\begin{align*}
\mathbb{E}(X_{n+1}^2| \mathcal{F}_n) = X_n^2\left( \frac{n+2p}{n} \right)  + \frac{pX_n}{n}, 
\end{align*}
which yields 
\begin{align*}
\mathbb{E}(X_{n+1}^2) = \mathbb{E}(X_n^2)\left( \frac{n+2p}{n} \right) + p\frac{\mathbb{E}(X_n)}{n}.
\end{align*}
Next we obtain by induction that
\begin{align*}
\mathbb{E}(X_n^2) =   \frac{s \Gamma(n +2p)}{\Gamma(n)\Gamma(1+ 2p)} \, + \,
\frac{sp \Gamma(n +2p)}{\Gamma(n)\Gamma(1+ p)} \sum_{k = 1}^{n-1}\frac{\Gamma(k + p)}{\Gamma(k +1 + 2p)}.
\end{align*}
Using Lemma \ref{gammalemma} we obtain the exact and the asymptotic formulas 
\begin{align} \label{2m}
\mathbb{E}(X_n^2) & = \frac{2s \Gamma(n +2p)}{\Gamma(n)\Gamma(1+ 2p)} \, - \,
\frac{s \Gamma(n +p)}{\Gamma(n)\Gamma(1+ p)}. \nonumber \\
&  \sim  \frac{2s n^{2p}}{\Gamma(1+ 2p)}
\end{align}

Let us apply the same ideas to compute $\mathbb{E}(X_n^3)$.
It follows from \eqref{condq0} that 
\begin{align*}
\mathbb{E}(X_{n+1}^3| \mathcal{F}_n) = X_n^3\left( \frac{n+3p}{n} \right)  + 
3p\frac{X_n^2}{n} + p\frac{X_n}{n}, 
\end{align*}
which yields
\begin{align*}
\mathbb{E}(X_{n+1}^3) = \mathbb{E}(X_n^3)\left( \frac{n+3p}{n} \right)  + 
3p\frac{\mathbb{E}(X_n^2)}{n} + p\frac{\mathbb{E}(X_n)}{n}.
\end{align*}
Induction gives us the following expression
\begin{align*}
\mathbb{E}(X_n^3) =   & \frac{s \Gamma(n +3p)}{\Gamma(n)\Gamma(1+ 3p)} \, + \,
\frac{6sp \Gamma(n +3p)}{\Gamma(n)\Gamma(1+ 2p)} \sum_{k = 1}^{n-1}\frac{\Gamma(k + 2p)}{\Gamma(k +1 + 3p)}
 \\
& - \, \frac{2sp \Gamma(n +3p)}{\Gamma(n)\Gamma(1+ p)} \sum_{k = 1}^{n-1}\frac{\Gamma(k + p)}{\Gamma(k +1 + 3p)}.
\end{align*}
Apply Lemma \ref{gammalemma} to the middle and right terms above to get 
\begin{align} \label{3m}
\mathbb{E}(X_n^3) & = \frac{s \Gamma(n +3p)}{\Gamma(n)\Gamma(1+ 3p)} \, + \, 
\left( \frac{6s \Gamma(n +3p)}{\Gamma(n)\Gamma(1+ 3p)} - \frac{6s \Gamma(n +2p)}{\Gamma(n)\Gamma(1+ 2p)} \right)
 \nonumber \\
& \,  - \,  \left( \frac{s \Gamma(n +3p)}{\Gamma(n)\Gamma(1+ 3p)} - \frac{s \Gamma(n +p)}{\Gamma(n)\Gamma(1+ p)} \right)
\nonumber \\
& = \frac{6s \Gamma(n +3p)}{\Gamma(n)\Gamma(1+ 3p)} - \frac{6s \Gamma(n +2p)}{\Gamma(n)\Gamma(1+ 2p)}
+ \frac{s \Gamma(n +p)}{\Gamma(n)\Gamma(1+ p)} \nonumber \\
& \sim \frac{6s n^{3p}}{\Gamma(1+ 3p)}.
\end{align}

Repeating all over again to $\mathbb{E}(X_n^4)$, we obtain
\begin{align}\label{4m}
\mathbb{E}(X_n^4) & =   \frac{s \Gamma(n +4p)}{\Gamma(n)\Gamma(1+ 4p)} \, + \,
\frac{36sp \Gamma(n +4p)}{\Gamma(n)\Gamma(1+ 3p)} \sum_{k = 1}^{n-1}\frac{\Gamma(k + 3p)}{\Gamma(k +1 + 4p)}
  \nonumber \\
& - \, \frac{28sp \Gamma(n +4p)}{\Gamma(n)\Gamma(1+ 2p)} \sum_{k = 1}^{n-1}\frac{\Gamma(k + 2p)}{\Gamma(k +1 + 4p)}
\, + \,
\frac{3sp \Gamma(n +4p)}{\Gamma(n)\Gamma(1+ p)} \sum_{k = 1}^{n-1}\frac{\Gamma(k + p)}{\Gamma(k +1 + 4p)}  \nonumber \\
& =   \frac{s \Gamma(n +4p)}{\Gamma(n)\Gamma(1+ 4p)} \, + \, \left( \frac{36s \Gamma(n +4p)}{\Gamma(n)\Gamma(1+ 4p)} \, - \, \frac{36s \Gamma(n +3p)}{\Gamma(n)\Gamma(1+ 3p)} \right) \nonumber \\
& - \, \left( \frac{14s \Gamma(n +4p)}{\Gamma(n)\Gamma(1+ 4p)} \, - \, \frac{14s \Gamma(n +2p)}{\Gamma(n)\Gamma(1+ 2p)} \right) \, + \, 
\left( \frac{s \Gamma(n +4p)}{\Gamma(n)\Gamma(1+ 4p)} \, - \, \frac{s \Gamma(n +p)}{\Gamma(n)\Gamma(1+ p)} \right) \nonumber \\
& = \frac{24s \Gamma(n +4p)}{\Gamma(n)\Gamma(1+ 4p)} - \frac{36s \Gamma(n +3p)}{\Gamma(n)\Gamma(1+ 3p)}
+ \frac{14s \Gamma(n +2p)}{\Gamma(n)\Gamma(1+ 2p)} - \frac{s \Gamma(n +p)}{\Gamma(n)\Gamma(1+ p)} 
\nonumber  \\
& \sim \frac{24s n^{4p}}{\Gamma(1+ 4p)}.
\end{align}


\subsubsection{Almost sure convergence}

\begin{proof}[Proof of Theorem \ref{q0thm}]

We begin with our martingale difference. Recall that $|D_j| \leq \frac{2}{a_j}$ (see \eqref{2aj}). Then apply the Burkholder's inequality (see Theorem \ref{burkineq} above) for $m > 1$ to get
\begin{align}\label{burk}
\mathbb{E}(|M_n|^{m}) \leq c \,  \mathbb{E}(|\sum_{j = 1}^{n}D_j^{2}|^{m/2}),
\end{align}
where $c$ is a positive constant depending on $m$ only. 
Along with Lemma \ref{diverges} for $ p > 1/2$ the previous inequality implies that 
$\sup \mathbb{E}(|M_n|^{m}) < \infty$, and it follows from the $L^m$ convergence theorem for 
martingales that $M_n \to M$ a.s. and in $L^m$. Moreover,
\begin{align*}
|\mathbb{E}(M)| = |\mathbb{E}(M - M_n)| \leq \mathbb{E}(|M - M_n|) \leq \mathbb{E}(|M - M_n|^{2})^{1/2} \to 0 \, \text{ as } n \to \infty.
\end{align*}
By \eqref{2m} we note that 
\begin{align*}
\mathbb{E}(M^2) & = \lim_{n \to \infty} \mathbb{E}((\frac{X_n}{a_n})^2) -s^2
=  \frac{2s \Gamma(1+p)^2}{\Gamma(1+ 2p)} - s^2,
\end{align*}
where $a_n \sim n^{p }/\Gamma(1 + p)$.
In order to arrive  at the conclusion of the theorem, let us compute the third and fourth 
moment of $M$ and verify that they do not correspond to the third and fourth moment of a normally distributed random variable. Applying \eqref{approx}, 
\eqref{2m}, \eqref{3m} and \eqref{4m}, we obtain 
\begin{align*}
\mathbb{E}(M^3) & = \lim_{n \to \infty} \big\{ \mathbb{E}((\frac{X_n}{a_n})^3) -3s \mathbb{E}((\frac{X_n}{a_n})^2) 
+ 3s^2\mathbb{E}(\frac{X_n}{a_n}) - s^3 \big\}  \\
& =  \frac{6s \Gamma(1+p)^3}{\Gamma(1+ 3p)} - \frac{6s^2 \Gamma(1+p)^2}{\Gamma(1+ 2p)} + 2s^3
\end{align*}
and
\begin{align*}
\mathbb{E}(M^4) & = \lim_{n \to \infty} \big\{ \mathbb{E}((\frac{X_n}{a_n})^4) - 4s \mathbb{E}((\frac{X_n}{a_n})^3) + 6s^2 \mathbb{E}((\frac{X_n}{a_n})^2) - 4s^3\mathbb{E}(\frac{X_n}{a_n}) + s^4 \big\}  \\
& =  \frac{24s \Gamma(1+p)^4}{\Gamma(1+ 4p)} - \frac{24s^2 \Gamma(1+p)^3}{\Gamma(1+ 3p)} + \frac{12s^3 \Gamma(1+p)^2}{\Gamma(1+ 2p)} - 3s^4,
\end{align*}
which completes the proof of Theorem \ref{q0thm}.

\end{proof}

\end{document}